\def\G{\Gamma}
\def\Z{\mathbb{Z}}
\begin{document}

\bibliographystyle{plain}

\title{WHEN IDEAL-BASED ZERO-DIVISOR GRAPHS ARE COMPLEMENTED OR UNIQUELY COMPLEMENTED}

\author{\footnotesize Jesse Gerald Smith Jr.}

\address{Division of Mathematics and Computer Science \\ Maryville College \\ 502 East Lamar Alexander Parkway, Maryville, TN, 37804 \\ e-mail: jesse.smith@maryvillecollege.edu}

%\author{SECOND AUTHOR}
%
%\address{Group, Laboratory, Address\\
%City, State ZIP/Zone, Country\\
%\email{author\_id@domain\_name}}

\maketitle

%\begin{history}
%\received{(Day Month Year)}
%\revised{(Day Month Year)}
%\accepted{(Day Month Year)}
%%\comby{(xxxxxxxxx)}
%\end{history}

\begin{abstract}
Let $R$ be a commutative ring with nonzero identity and $I$ a proper ideal of $R$. The {\it ideal-based zero-divisor graph} of $R$ with respect to the ideal $I$, denoted by $\Gamma_I(R)$, is the graph on vertices $\{x \in R\setminus I \mid xy\in I$ for some $y\in R\setminus I \}$, where distinct vertices $x$ and $y$ are adjacent  if and only if $xy\in I$.  In this paper,we give a complete classification of when an ideal-based zero-divisor graph of a commutative ring is complemented or uniquely complemented based on the total quotient ring of $R/I$.
\end{abstract}

\keywords{zero-divisor, ideal-based, complemented, uniquely complemented, von Neumann regular}
%\MSC[2010] 00-01\sep  99-00

\ccode{2000 Mathematics Subject Classification: 13A99,05C99}
\section{Preliminaries}

Let $R$ be a commutative ring with nonzero identity, $I$ a proper ideal of $R$, and $Z(R)$ the set of zero-divisors of $R$. Throughout this paper, a {\it graph} will always be a simple graph, i.e., an undirected graph without multiple edges or loops. In 1988, I. Beck used zero-divisors to produce a graph given a ring $R$ \cite{Beck1988}; he was interested in colorings of these graphs. In 1999, D. F. Anderson and P. S. Livingston modified Beck's definition to the following \cite{AndLiv,livThesis}; the {\it zero-divisor graph} of $R$, denoted by $\G(R)$, is the graph on the vertex set $Z(R)^*=Z(R)\setminus\{0\}$, where two distinct vertices $x$ and $y$ are adjacent if and only if $xy=0$.   In 2001, S. P. Redmond gave the following definition (\cite{redDis} and \cite{Redmond1}) as a generalization of the zero-divisor graph; the graph on vertex set $\{x \in R\setminus I \mid xy\in I$ for some $y\in R\setminus I \}$, where distinct vertices $x$ and $y$ are adjacent  if and only if $xy\in I$. This is called the {\it ideal-based zero-divisor graph} of $R$ with respect to the ideal $I$, denoted by $\G_I(R)$. Note that $\G_I(R)$ and $\G(R/I)$ are non-empty if and only if $I$ is not a prime ideal of $R$.

Recall that a ring $R$ is von Neumann regular if for every $x\in R$, there exists a $y\in R$ such that $x=xyx$. In \cite{DFAetal2003}, the authors find a connection between a ring being von Neumann regular and a graph property called complemented. They define $a\sim b$ if $a$ and $b$ are not adjacent, yet they are adjacent to exactly the same vertices of $G$. Given distinct vertices $a$ and $b$ of a graph $G$, we say that the vertices are {\it orthogonal}, denoted $a\perp b$, if $a$ and $b$ are adjacent and there is no vertex adjacent to both $a$ and $b$. Notice that $a\perp b$ if and only if $a$ and $b$ are adjacent and the edge $a-b$ is not part of triangle (a 3-cycle) in $G$. A graph $G$ is called {\it complemented} if given any vertex $a$ of $G$, there exists a vertex $b$ of $G$ such that $a\perp b$. A graph $G$ is {\it uniquely complemented} if it is complemented and $a\perp b$ and $a\perp c$ imply that $a\sim c$. The preceding relations and definitions are from \cite{DFAetal2003} and \cite{rs2002}. In \cite[Theorem 3.5]{DFAetal2003}, the authors show that for a reduced ring $R$, $\G(R)$ is uniquely complemented if and only if $\G(R)$ is complemented, if and only if $T(R)$ is von Neumann regular. In this paper, we extend this result to $\G_I(R)$.

Throughout this paper, $R$ will be a commutative ring with nonzero identity, $Z(R)$ its set of zero-divisors, $nil(R)$ its ideal of nilpotent elements, and total quotient ring $T(R)=R_S$, where $S=R\setminus \{0\}$. Given an ideal $I$ of $R$, we define $\sqrt{I}=\{r\in R\mid r^k \in I\text{ for some } k\in \mathbb{N}\}$. A ring $R$ is reduced if $nil(R)=\sqrt{\{0\}}=\{0\}$. Notice that $R/I$ is reduced if and only if $\sqrt{I}=I$. An ideal $I$ is a radical ideal if $\sqrt{I}=I$. Let $\Z$ and $\Z_n$ denote the integers and the integers modulo $n$, respectively. We will also use the well-known result that $|Z(R)|=2$ if and only if $R/I\cong \Z_4$ or $\Z_2[X]/(X^2)$. We will denote the set of vertices of a graph $G$ by $V(G)$. In this paper, we will also use that $|V(\G_I(R))|=|I||V(\Gamma(R/I)|$ \cite[Corollary 2.7]{Redmond1}.We say that a graph is complete on $n$ vectrices, denoted by $K^n$, if it is a graph on $n$ vectiecs in which each vertex is connected to all other vertices.

\section{ When $\G_I(R)$ is complemented or uniquely complemented}

We consider the situation in two cases: either $I$ is a radical ideal of $R$ or $I$ is a non-radical ideal of $R$.

\begin{proposition}
Let $R$ be a commutative ring with nonzero identity and $I$ a nonzero, non-radical ideal of $R$. If $|V(\Gamma(R/I))|\geq 2$, then $\Gamma_I(R)$ is not complemented.
\label{ch3:compPROP1}
\end{proposition}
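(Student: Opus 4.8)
The plan is to exhibit a single vertex of $\Gamma_I(R)$ that admits no orthogonal complement; since being complemented forces \emph{every} vertex to have one, this proves the claim. The right vertex to single out is a ``top power'' of an element witnessing non-radicality. Concretely, I would first fix $a\in\sqrt{I}\setminus I$ (available precisely because $I$ is non-radical), observe that $\bar a$ is a nonzero nilpotent of $R/I$, and let $m\ge 1$ be largest with $\bar a^{\,m}\ne 0$, so that $a^{m+1}\in I$; then $b:=a^{m}$ satisfies $b\notin I$ and $b^{2}=a^{2m}\in I$ (as $2m\ge m+1$). Using $I\ne 0$, pick $0\ne i\in I$; then $b(b+i)=b^{2}+bi\in I$ with $b+i\in R\setminus I$, so $b$ is genuinely a vertex of $\Gamma_I(R)$, and $b$ is adjacent to $b+i$ for every nonzero $i\in I$ (so the coset $b+I$ is a clique of $\Gamma_I(R)$, since $b^{2}\in I$). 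Passing from $a$ to $b$ is what makes the argument uniform: the equality $b^{2}\in I$ is used repeatedly below, whereas $a^{2}$ need not lie in $I$.

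Next I would assume toward a contradiction that $b\perp w$ for some vertex $w$, so that $bw\in I$, $w\in R\setminus I$, $w\ne b$, and the edge $b-w$ lies in no triangle. I would split according to whether $\bar w=\bar b$ in $R/I$. If $\bar w\ne\bar b$, then $c:=b+i$ (for any $0\ne i\in I$) is a common neighbor of $b$ and $w$: indeed $c\in R\setminus I$, $c\ne b$ since $i\ne 0$, and $c\ne w$ since $\bar c=\bar b\ne\bar w$, while $cb=b^{2}+ib\in I$ and $cw=bw+iw\in I$, so $\{b,w,c\}$ is a triangle through $b-w$ — a contradiction. Thus the ``shift by an element of $I$'' move handles every $w$ lying outside the coset $b+I$.

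The remaining case $\bar w=\bar b$ is the main obstacle, since there shifting by $I$ produces nothing new, and it is exactly here that the hypothesis $|V(\Gamma(R/I))|\ge 2$ must be used. Write $w=b+j$ with $j\in I$. Because $\bar b^{2}=0$ and $\bar b\ne 0$, $\bar b$ lies in $Z(R/I)^{*}=V(\Gamma(R/I))$; and since $\Gamma(R/I)$ is connected \cite{AndLiv} and has at least two vertices, $\bar b$ has a neighbor $\bar v$ there. Lifting $\bar v$ to $v\in R\setminus I$ gives $bv\in I$ and $\bar v\ne\bar b=\bar w$, hence $v\ne b$ and $v\ne w$, while $vw=v(b+j)=vb+vj\in I$; so $\{b,w,v\}$ is once more a triangle through $b-w$, contradicting $b\perp w$. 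Hence $b$ has no orthogonal complement and $\Gamma_I(R)$ is not complemented. I would also remark that the hypothesis cannot be dropped: when $|V(\Gamma(R/I))|=1$ one has $R/I\cong\Z_4$ or $\Z_2[X]/(X^2)$, and if moreover $|I|=2$ then $\Gamma_I(R)\cong K^{2}$, which \emph{is} complemented — so the proof genuinely needs a second coset of $R/I$ together with connectedness of the ordinary zero-divisor graph to manufacture the common neighbor in the hard case.
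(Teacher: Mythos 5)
Your proposal is correct and follows essentially the same route as the paper: both single out a vertex $b\notin I$ with $b^2\in I$ (the paper simply asserts its existence from $\sqrt{I}\neq I$, while you construct it explicitly as a top power) and then split on whether the would-be complement lies in the coset $b+I$, using a shift by a nonzero element of $I$ in one case and connectedness of $\Gamma(R/I)$ with $|V(\Gamma(R/I))|\geq 2$ in the other to produce a triangle. No gaps; the extra remark on the necessity of the hypothesis matches the discussion following Lemma~\ref{completelemma} in the paper.
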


\begin{proof} Since $I\neq \sqrt{I}$, there exists an $r\in R\setminus I$ such that $r^2\in I$.  Then $r\in V(\Gamma_I(R))$. We claim that $r$ has no complement in $\Gamma_I(R)$. Let $s$ be any vertex of $\Gamma_I(R)$ adjacent to $r$; so $rs\in I$. Notice that $r\neq s$ as they are distinct adjacent vertices of $\G_I(R)$. Then there are two possibilities: (1) there exists an $i\in I$ such that $s=r+i$ or (2) $s\neq r+i$ for all $i\in I$.

Case (1): Assume there exists an $i\in I$ such that $s=r+i$. Then $r+I=s+I$ in $R/I$. Since $|V(\Gamma(R/I))|\geq 2$ and $\Gamma(R/I)$ is connected, there exists a vertex $t+I$ adjacent to $r+I=s+I$ in $\Gamma(R/I)$. Notice that $t,r,s=r+i$ are all distinct vertices of $\Gamma_I(R)$ that are mutually adjacent. Thus the edge $r-s$ is part of a triangle in $\Gamma_I(R)$; so $s$ is not a complement of $r$ in $\Gamma_I(R)$.

Case (2): Assume $s\neq r+i$ for all $i\in I$. Since $I$ is non-zero, choose $0\neq i\in I$. Then the vertices $s,r,r+i$ are distinct mutually adjacent vertices of $\Gamma_I(R)$. Thus the edge $r-s$ is part of a triangle in $\Gamma_I(R)$; so, as before, $s$ is not a complement of $r$ in $\Gamma_I(R)$.

Thus no vertex adjacent to $r$ is a complement of $r$; so $\Gamma_I(R)$ is not complemented. \end{proof}

\begin{lemma} Let $R$ be a commutative ring with nonzero identity and $I$ an ideal of $R$. If $\G(R/I)\cong K^1$, then $\G_I(R)\cong K^{|I|}$.
\label{completelemma}
\end{lemma}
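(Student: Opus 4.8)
The plan is to compute the vertex set and edge set of $\Gamma_I(R)$ outright. First I would extract from the hypothesis $\Gamma(R/I)\cong K^1$ that $R/I$ has a single nonzero zero-divisor, say $\bar a=a+I$ with $a\in R\setminus I$, and that $\bar a^{\,2}=\bar 0$, i.e.\ $a^2\in I$: indeed $\bar a$, being a zero-divisor, is annihilated by some nonzero $\bar b\in R/I$, and then $\bar b$ is itself a nonzero zero-divisor, so $\bar b=\bar a$ by uniqueness. (Alternatively one may invoke the standard fact recalled in the Preliminaries that $R/I\cong\Z_4$ or $\Z_2[X]/(X^2)$, in each of which the unique nonzero zero-divisor squares to $0$.)

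Next I would pin down the vertex set. The correspondence underlying $\Gamma_I(R)$ is that $x$ is a vertex if and only if $x\notin I$ and $x+I$ is a nonzero zero-divisor of $R/I$: if $xy\in I$ with $y\notin I$ then $(x+I)(y+I)=I$ with $y+I\neq I$, and conversely. Hence every vertex of $\Gamma_I(R)$ reduces mod $I$ to $\bar a$, so $V(\Gamma_I(R))\subseteq a+I$; and conversely each $a+i$ with $i\in I$ is a vertex, since $(a+i)a=a^2+ia\in I$ with $a\notin I$. Therefore $V(\Gamma_I(R))=\{\,a+i\mid i\in I\,\}$, a set of exactly $|I|$ elements, consistent with $|V(\Gamma_I(R))|=|I|\,|V(\Gamma(R/I))|=|I|$ from \cite[Corollary 2.7]{Redmond1}.

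Finally I would verify that $\Gamma_I(R)$ is complete on this set. For distinct vertices $a+i$ and $a+j$ with $i,j\in I$ and $i\neq j$ one has
\[
(a+i)(a+j)=a^2+a(i+j)+ij ,
\]
and all three summands lie in $I$ --- $a^2\in I$ by the first step, while $a(i+j),\,ij\in I$ because $I$ is an ideal --- so $a+i$ and $a+j$ are adjacent. Thus all $|I|$ vertices are pairwise adjacent, and $\Gamma_I(R)\cong K^{|I|}$.

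I do not anticipate a real obstacle; the only step needing a little care is ruling out vertices outside the single coset $a+I$, which is exactly the vertex/zero-divisor correspondence above, after which the lemma reduces to the one-line ideal computation displayed.
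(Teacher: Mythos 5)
Your proposal is correct and follows essentially the same route as the paper: identify the unique nonzero zero-divisor class $a+I$ with $a^2\in I$, observe that the vertex set of $\Gamma_I(R)$ is exactly the coset $a+I$ (of size $|I|$), and check that any two such vertices multiply into $I$. You simply spell out the details (the intrinsic argument that $\bar a^2=\bar 0$ and the expansion $(a+i)(a+j)=a^2+a(i+j)+ij$) that the paper leaves implicit.
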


\begin{proof} $|V(\G(R/I))|=1$ if and only $|Z(R/I)|=2$, if and only if $R/I\cong \Z_4$ or $\Z_2[X]/(X^2)$. Thus $V(\G(R/I))=\{a+I\}$, where $a^2\in I$. Then $V(\G_I(R))=\{a+i\}_{i\in I}$. Notice that $(a+i)(a+j)\in I$ for all $i,j\in I$. Moreover $|V(\G_I(R))|=|I||V(\G(R/I))|=|I|\cdot 1=|I|$. Thus $\G_I(R)\cong K^{|I|}$.\label{completeGI} \end{proof}

\begin{theorem} \label{notradicalK2} Let $R$ be a commutative ring with nonzero identity and $I$ a non-radical ideal of $R$. Then $\Gamma_I(R)$ is complemented if and only $\Gamma_I(R)\cong K^2$. 
\end{theorem}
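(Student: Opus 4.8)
I would prove the two implications separately, the forward one resting almost entirely on Proposition~\ref{ch3:compPROP1} and Lemma~\ref{completelemma}. The reverse implication is immediate: if $\Gamma_I(R)\cong K^2$, with vertex set $\{a,b\}$, then $a$ and $b$ are adjacent and, there being no third vertex, the edge $a-b$ lies on no triangle; hence $a\perp b$ and $b\perp a$, so every vertex has a complement and $\Gamma_I(R)$ is complemented.

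For the forward implication, assume $\Gamma_I(R)$ is complemented; I take $I\neq 0$ throughout. (This is needed in order to invoke Proposition~\ref{ch3:compPROP1}, and it is also needed for the statement itself: when $I=0$ and $R$ is non-reduced, $\Gamma_{(0)}(R)=\Gamma(R)$ can be complemented without being $K^2$ --- for instance $\Gamma(\Z_8)$ is a complemented path on three vertices.) Since a prime ideal is radical, the non-radical ideal $I$ is not prime; hence, by the remark in Section~1, $\Gamma_I(R)$ and $\Gamma(R/I)$ are non-empty, so in particular $|V(\Gamma(R/I))|\geq 1$. If $|V(\Gamma(R/I))|\geq 2$, then Proposition~\ref{ch3:compPROP1} would force $\Gamma_I(R)$ to be non-complemented, contrary to hypothesis; hence $|V(\Gamma(R/I))|=1$, i.e.\ $\Gamma(R/I)\cong K^1$, and Lemma~\ref{completelemma} then yields $\Gamma_I(R)\cong K^{|I|}$.

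To finish I would pin down $|I|$ by noting that a complete graph $K^n$ is complemented precisely when $n=2$: for $n\geq 3$ (finite or infinite), each vertex $a$ is adjacent to every other vertex, so given any neighbour $b$ a third vertex is adjacent to both $a$ and $b$, placing the edge $a-b$ on a triangle and giving $a\not\perp b$; and $K^1$ has no edges. Since $\Gamma_I(R)\cong K^{|I|}$ is complemented and $|I|\geq 2$ (because $I\neq 0$), we conclude $|I|=2$ and therefore $\Gamma_I(R)\cong K^2$. I do not anticipate a genuine obstacle, since all the substance is packaged in Proposition~\ref{ch3:compPROP1} and Lemma~\ref{completelemma}; the only points to watch are the elementary fact that $K^n$ is complemented only for $n=2$, and the restriction to $I\neq 0$ (without which the statement fails, as noted).
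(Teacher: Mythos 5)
Your proof is correct and follows the same route as the paper's: invoke Proposition~\ref{ch3:compPROP1} to force $|V(\Gamma(R/I))|\leq 1$, use non-primeness of a non-radical ideal to rule out the empty case, apply Lemma~\ref{completelemma} to get $\Gamma_I(R)\cong K^{|I|}$, and conclude $|I|=2$ since $K^2$ is the only complemented complete graph. The one substantive difference is your explicit restriction to $I\neq 0$, and this is not a cosmetic point: Proposition~\ref{ch3:compPROP1} genuinely requires $I$ nonzero, the theorem as printed omits that hypothesis, and your example $R=\Z_8$, $I=(0)$ shows the omission matters --- $(0)$ is non-radical in $\Z_8$ and $\Gamma_{(0)}(\Z_8)=\Gamma(\Z_8)$ is a complemented path on three vertices, not $K^2$. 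So you have in fact caught a gap in the paper's statement (and its proof, which silently applies Proposition~\ref{ch3:compPROP1} without the nonzero hypothesis); the theorem should read ``$I$ a nonzero, non-radical ideal,'' exactly as you assume. Your spelled-out verifications of the reverse implication and of the fact that $K^n$ is complemented only for $n=2$ are fine and match what the paper leaves implicit.
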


\begin{proof}
The ``$\Leftarrow$'' implication is clear.

Conversely assume that $\Gamma_I(R)$ is complemented. Then $|V(\Gamma(R/I))|\leq 1$ by Proposition \ref{ch3:compPROP1}. Since $I$ is not prime (as it is non-radical), it follows that $|V(\Gamma(R/I))|=1$. Thus $\Gamma_I(R)\cong K^{|I|}$ by Lemma \ref{completelemma}. Since the only complemented complete graph is $K^2$, it follows that $|I|=2$ and $\Gamma_I(R)\cong K^2$.
\end{proof}

Notice that if $|V(\Gamma(R/I))|=1$, then $R/I\cong \mathbb{Z}_4$ or $\mathbb{Z}_2[X]/(X^2)$; so $ \sqrt{I}\neq I$. Moreover, in this case, $\Gamma_I(R)$ is complemented if and only if $|I|=2$ by the preceding theorem. Thus it remains to investigate the case when $|V(\Gamma(R/I))|\geq 2$.

\begin{theorem}
\label{comptheorem}
Let $R$ be a commutative ring with nonzero identity and $I$ a nonzero, non-prime ideal of $R$. Then $\Gamma_I(R)$ is complemented and $|V(\Gamma(R/I))|\geq 2$ if and only if $\Gamma(R/I)$ is complemented and $\sqrt{I}=I$.
\end{theorem}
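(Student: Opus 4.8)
The plan is to prove both implications by transporting adjacency and orthogonality between $\Gamma_I(R)$ and $\Gamma(R/I)$ along the reduction map $x \mapsto x+I$. The fact that does all the work is: \emph{if $\sqrt{I}=I$, then for $r\in R\setminus I$ and $0\neq i\in I$ the vertices $r$ and $r+i$ of $\Gamma_I(R)$ are never adjacent}, since $r(r+i)=r^2+ri\in I$ would force $r^2\in I$, contradicting $r\notin I=\sqrt{I}$. Equivalently, when $I$ is radical, distinct vertices $x,y$ of $\Gamma_I(R)$ are adjacent if and only if $x+I$ and $y+I$ are distinct and adjacent in $\Gamma(R/I)$; thus $\Gamma_I(R)$ is $\Gamma(R/I)$ with each vertex "blown up" to an independent set of size $|I|$, with complete bipartite joins between the fibers over adjacent vertices. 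I would record this observation first, since both directions reduce to it.

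For the forward implication, assume $\Gamma_I(R)$ is complemented and $|V(\Gamma(R/I))|\geq 2$. First I would invoke the contrapositive of Proposition \ref{ch3:compPROP1}: as $I$ is nonzero, $\Gamma_I(R)$ is complemented, and $|V(\Gamma(R/I))|\geq 2$, the ideal $I$ cannot be non-radical, so $\sqrt{I}=I$. To see $\Gamma(R/I)$ is complemented, fix a vertex $a+I$; then $a$ is a vertex of $\Gamma_I(R)$, hence has an orthogonal vertex $b$ there. I claim $b+I$ is orthogonal to $a+I$ in $\Gamma(R/I)$: adjacency holds since $ab\in I$ and $b\notin a+I$ (this uses $\sqrt{I}=I$, via the italicized fact), and if some $t+I$ were adjacent to both $a+I$ and $b+I$, then $t$, lying in a fiber distinct from those of $a$ and $b$, would be a vertex of $\Gamma_I(R)$ adjacent to both $a$ and $b$, contradicting $a\perp b$.

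For the reverse implication, assume $\Gamma(R/I)$ is complemented and $\sqrt{I}=I$. Since $I$ is non-prime, $\Gamma(R/I)$ is non-empty, and being complemented it contains a pair of orthogonal vertices, so $|V(\Gamma(R/I))|\geq 2$ (alternatively, $|V(\Gamma(R/I))|=1$ would force $R/I\cong\Z_4$ or $\Z_2[X]/(X^2)$, neither reduced, contradicting $\sqrt{I}=I$). To show $\Gamma_I(R)$ is complemented, fix a vertex $x$; then $x+I$ is a vertex of $\Gamma(R/I)$, so it has an orthogonal vertex $c+I$. I claim $c$ is orthogonal to $x$ in $\Gamma_I(R)$: they are adjacent since $xc\in I$ and $x\neq c$ (as $x+I\neq c+I$); and if $w$ were a vertex of $\Gamma_I(R)$ adjacent to both $x$ and $c$, then by the italicized fact $w$ cannot lie in the fiber over $x+I$ nor over $c+I$, so $w+I$ is a vertex of $\Gamma(R/I)$ distinct from $x+I$ and $c+I$ and adjacent to both, contradicting $(x+I)\perp(c+I)$.

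The step I expect to be the main obstacle is this last one, lifting orthogonality from $\Gamma(R/I)$ back up to $\Gamma_I(R)$: a priori a common neighbour of $x$ and $c$ in $\Gamma_I(R)$ could collapse onto $x+I$ or $c+I$ and so fail to contradict $(x+I)\perp(c+I)$, and ruling this out is exactly where the radicality of $I$ is indispensable — it is also what pins down $\sqrt{I}=I$, rather than some weaker hypothesis, as the correct condition. Everything else is bookkeeping about which cosets are vertices of which graph and the routine use of connectedness of $\Gamma(R/I)$.
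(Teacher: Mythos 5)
Your proposal is correct and follows essentially the same route as the paper's proof: the central computation in both is that $\sqrt{I}=I$ forbids adjacency between vertices in the same coset of $I$ (via $r(r+i)\in I \Rightarrow r^2\in I \Rightarrow r\in I$), which is then used to transport orthogonal pairs and triangles back and forth between $\Gamma_I(R)$ and $\Gamma(R/I)$. Your ``blow-up'' description of $\Gamma_I(R)$ over $\Gamma(R/I)$ is just a tidier packaging of the same lemma, not a different argument.
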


\begin{proof}
``$\Rightarrow$'' Assume that $\Gamma_I(R)$ is complemented and $|V(\Gamma(R/I))|\geq 2$. Then $I=\sqrt{I}$ by Proposition \ref{ch3:compPROP1}. So it remains to show that $\Gamma(R/I)$ is complemented.
Let $r+I$ be vertex of $\Gamma(R/I)$. Then $r$ is a vertex of $\Gamma_I(R)$. By assumption, $\Gamma_I(R)$ is complemented; so there exists a vertex $s$ of $\Gamma_I(R)$ such that $r\perp s$. We first show that $r+I\neq s+I$. Assume to the contrary; then $r-s=i\in I$. Thus $r(r-s)=ri\in I$. Since $r\perp s$, then $rs\in I$. Hence $r^2=ri+rs\in I$, and thus $r\in I$ since $\sqrt{I}=I$. This is a contradiction since $r+I\neq I$. Thus $r+I\neq s+I$. Since $r\perp s$ in $\Gamma_I(R)$ and $r+I\neq s+I$, it follows that $r+I$ is adjacent to $s+I$ in $\Gamma(R/I)$. It now remains only to show there is no other vertex in $\Gamma(R/I)$ adjacent to both of these. Assume to the contrary; then there exists a vertex $t+I$ adjacent to both $r+I$ and $s+I$ (hence $t+I,r+I,$ and $s+I$ are distinct elements of $R/I$). Then notice that $r,t,s$ are distinct, mutually adjacent vertices of $\Gamma_I(R)$. But this is a contradiction as $r\perp s$ in $\Gamma_I(R)$. Therefore $r+I\perp s+I$. Since $r+I\in V(\Gamma(R/I))$ was chosen arbitrarily, it follows that $\Gamma(R/I)$ is complemented.

``$\Leftarrow$'' Assume that $\Gamma(R/I)$ is complemented and $\sqrt{I}=I$. Since $\Gamma(R/I)$ is complemented and nonempty, it follows that $|V(\Gamma(R/I)|\geq 2$. Let $r\in V(\Gamma_I(R))$; then $r+I\in V(\Gamma(R/I))$. Since $\Gamma(R/I)$ is complemented, there exists a vertex $s+I$ in $\Gamma(R/I)$ such that $r+I \perp s+I$. Since these are vertices in $\Gamma(R/I)$, it follows that neither is zero in $R/I$; hence $r,s\not \in I$ and $rs\in I$. Thus $r$ and $s$ are adjacent vertices in $\Gamma_I(R)$. We claim that $r\perp s$ in $\Gamma_I(R)$. Assume to the contrary; then there exists a $t\in R\setminus I$ such that $r,s,$ and $t$ are distinct and mutually adjacent in $\Gamma_I(R)$. Using that $\sqrt{I}=I$, a similar argument to that in the forward implication shows that $r+I,s+I,$ and $t+I$ are distinct vertices of $\Gamma(R/I)$. It then follows that $r+I,s+I,$ and $t+I$ are distinct, mutually adjacent vertices of $\Gamma(R/I)$; but this is a contradiction as $r+I\perp s+I$. Therefore $r\perp s$ in $\Gamma_I(R)$. Since $r\in\Gamma_I(R)$ was chosen arbitrarily, it follows that $\Gamma_I(R)$ is complemented. 
\end{proof}

Combining the previous two theorems yields the following result.

\begin{corollary}

Let $R$ be a commutative ring with nonzero identity and $I$ a proper nonzero non-prime ideal of $R$. Then $\Gamma_I(R)$ is complemented if and only if exactly one of the following statements holds.
\begin{enumerate}
\item $R/I\cong \mathbb{Z}_4$ or $R/I \cong \mathbb{Z}_2[X]/(X^2)$, and $|I|=2$.
\item $\Gamma(R/I)$ is complemented and $I$ is a radical ideal of $R$.
\end{enumerate}
\end{corollary}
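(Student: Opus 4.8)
The plan is to split along the dichotomy already used in this section: $I$ radical or $I$ non-radical. In each case the conclusion is essentially read off from the two preceding theorems, so the work is bookkeeping rather than new argument.

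First I would treat the case where $I$ is non-radical. By Theorem \ref{notradicalK2}, $\Gamma_I(R)$ is complemented if and only if $\Gamma_I(R)\cong K^2$, so it suffices to show that for a nonzero ideal $I$ the condition $\Gamma_I(R)\cong K^2$ is equivalent to statement (1). Using $|V(\Gamma_I(R))|=|I|\,|V(\Gamma(R/I))|$ together with $|I|\geq 2$, a two-vertex graph forces $|I|=2$ and $|V(\Gamma(R/I))|=1$, hence $R/I\cong\mathbb{Z}_4$ or $R/I\cong\mathbb{Z}_2[X]/(X^2)$ by the standard fact that $|V(\Gamma(R/I))|=1$ iff $|Z(R/I)|=2$. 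Conversely, if $|I|=2$ and $R/I$ is one of these two rings, then $\Gamma(R/I)\cong K^1$, so $\Gamma_I(R)\cong K^{|I|}=K^2$ by Lemma \ref{completelemma}. Thus, when $I$ is non-radical, $\Gamma_I(R)$ is complemented exactly when (1) holds; and note that (1) itself forces $I$ to be non-radical, since then $R/I$ is not reduced.

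Next I would treat the case $\sqrt{I}=I$. Here $R/I$ is reduced, so $|V(\Gamma(R/I))|\neq 1$ (otherwise $R/I$ would be $\mathbb{Z}_4$ or $\mathbb{Z}_2[X]/(X^2)$, neither of which is reduced); since $I$ is not prime, $\Gamma(R/I)$ is nonempty, whence $|V(\Gamma(R/I))|\geq 2$ automatically. Theorem \ref{comptheorem} then says that $\Gamma_I(R)$ is complemented (and $|V(\Gamma(R/I))|\geq 2$) if and only if $\Gamma(R/I)$ is complemented and $\sqrt{I}=I$; as the radical condition already holds, this is precisely statement (2).

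Finally I would assemble the two halves. Every proper nonzero non-prime ideal is either radical or not, so $\Gamma_I(R)$ is complemented precisely when (1) holds (non-radical subcase) or (2) holds (radical subcase); conversely each of (1) and (2) on its own forces $\Gamma_I(R)$ complemented. Since (1) entails $I$ non-radical while (2) requires $I$ radical, the two statements are mutually exclusive, so "exactly one of the following holds" is legitimate. I do not expect a real obstacle; the only point needing a little care is exactly this mutual-exclusivity check, which is what lets us upgrade "(1) or (2)" to "exactly one of (1), (2)".
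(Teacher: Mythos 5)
Your proposal is correct and is exactly the argument the paper intends: the paper states only that the corollary follows by ``combining the previous two theorems,'' and your case split on $I$ radical versus non-radical, invoking Theorem \ref{notradicalK2} plus Lemma \ref{completelemma} in the non-radical case and Theorem \ref{comptheorem} in the radical case, is precisely that combination. Your explicit check that (1) forces $I$ non-radical while (2) requires $I$ radical is the right way to justify the ``exactly one'' phrasing, a detail the paper leaves implicit.
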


Using the fact that $R/I$ is reduced if and only if $\sqrt{I}=I$, we can extend the previous theorem to the following corollary using \cite[Theorem 3.5] {DFAetal2003}.Recall that if $I$ is a prime ideal, then all of the graphs in question are empty. We will consider the empty graph to be vacuously uniquely complemented.

\begin{corollary}\label{compcor1} Let $R$ be a commutative ring with nonzero identity and $I$ a radical ideal of $R$. Then the following statements are equivalent.
\begin{enumerate}
	\item $\Gamma_I(R)$ is complemented.
	\item $\Gamma(R/I)$ is complemented.
	\item $\Gamma(R/I)$ is uniquely complemented.
	\item $T(R/I)$ is von Neumann regular.
\end{enumerate}
\end{corollary}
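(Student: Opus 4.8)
The plan is to establish the chain of equivalences by chaining together results already available. Since $I$ is a radical ideal, we have $\sqrt{I}=I$, equivalently $R/I$ is reduced. The equivalence of (2), (3), and (4) is then immediate from \cite[Theorem 3.5]{DFAetal2003} applied to the reduced ring $R/I$. So the real content is the equivalence of (1) with (2), and here I would split into cases according to whether $I$ is prime, whether $I$ is zero, and whether $|V(\Gamma(R/I))| \geq 2$.

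First I would dispose of the degenerate cases. If $I$ is a prime ideal, then $\Gamma_I(R)$ and $\Gamma(R/I)$ are both empty, hence vacuously (uniquely) complemented and $T(R/I)$ is a field, so all four statements hold. If $I = (0)$, then $\Gamma_I(R) = \Gamma(R) = \Gamma(R/I)$ literally, and the equivalence of (1) and (2) is a tautology (while (2)$\Leftrightarrow$(3)$\Leftrightarrow$(4) is again \cite[Theorem 3.5]{DFAetal2003}, as $R$ is reduced). So assume $I$ is a nonzero, non-prime, proper ideal of $R$.

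Next, for the nonzero non-prime case, I would invoke Theorem \ref{comptheorem} directly. That theorem states that $\Gamma_I(R)$ is complemented and $|V(\Gamma(R/I))| \geq 2$ if and only if $\Gamma(R/I)$ is complemented and $\sqrt{I} = I$. Since we are assuming $\sqrt{I} = I$ throughout, the condition ``$\Gamma(R/I)$ is complemented and $\sqrt{I}=I$'' reduces to just ``$\Gamma(R/I)$ is complemented.'' It remains to check that, under the hypothesis $\sqrt{I}=I$, one cannot have $\Gamma_I(R)$ complemented while $|V(\Gamma(R/I))| \leq 1$: if $|V(\Gamma(R/I))| = 0$ then $I$ is prime (excluded), and if $|V(\Gamma(R/I))| = 1$ then $R/I \cong \mathbb{Z}_4$ or $\mathbb{Z}_2[X]/(X^2)$, neither of which is reduced, contradicting $\sqrt{I}=I$. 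Hence under our standing hypothesis, $\Gamma_I(R)$ complemented forces $|V(\Gamma(R/I))| \geq 2$, and Theorem \ref{comptheorem} gives (1)$\Leftrightarrow$(2). Finally, (2)$\Leftrightarrow$(3)$\Leftrightarrow$(4) follows from \cite[Theorem 3.5]{DFAetal2003} since $R/I$ is reduced, completing the cycle.

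I do not anticipate a genuine obstacle here, as this corollary is essentially a bookkeeping consequence of Theorem \ref{comptheorem} and \cite[Theorem 3.5]{DFAetal2003}; the only subtle point is making sure the case analysis (prime, zero, $|V| \le 1$) is exhaustive and that each degenerate branch is consistent with the ``vacuously uniquely complemented'' convention for the empty graph, which is exactly why that convention was stated just before the corollary.
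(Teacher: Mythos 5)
Your proposal is correct and follows essentially the same route the paper intends: the corollary is presented as a direct combination of Theorem \ref{comptheorem} (for the equivalence of (1) and (2) when $I$ is nonzero and non-prime) with \cite[Theorem 3.5]{DFAetal2003} applied to the reduced ring $R/I$ (for (2)$\Leftrightarrow$(3)$\Leftrightarrow$(4)), together with the vacuous prime-ideal case and the observation that $|V(\Gamma(R/I))|=1$ is impossible for a radical ideal. Your handling of the $I=(0)$ case and the check that complementedness forces $|V(\Gamma(R/I))|\geq 2$ match the arguments the paper itself uses in the proof of Theorem \ref{radicalG_Iuc}.
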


We proceed to consider when $\Gamma_I(R)$ is uniquely complemented. Based on the preceding results, we are led to conjecture that when $I$ is a radical ideal, then  $\Gamma_I(R)$ is uniquely complemented if and only $\Gamma_I(R)$ is complemented. The following two lemmas are similar to those found in \cite[pp.~55-56]{redDis}. %It was after working on the following lemmas that the results from \cite[pp.~55-56]{redDis} were noticed by this author.

\begin{lemma} Let $R$ be a commutative ring with nonzero identity and $I$ a radical ideal of $R$. Then $x\perp y$ in $\Gamma_I(R)$ if and only if $x+I \perp y+I$ in $\Gamma(R/I)$.
\label{perplemma}
\end{lemma}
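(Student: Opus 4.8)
The plan is to exploit the fact already used implicitly in the proof of Theorem~\ref{comptheorem}: when $\sqrt{I}=I$, the canonical projection $R\to R/I$ carries edges of $\Gamma_I(R)$ to edges of $\Gamma(R/I)$ \emph{without collapsing endpoints}. So the first step is to isolate the key technical observation: if $x$ is a vertex of $\Gamma_I(R)$ and $w\in R$ satisfies $xw\in I$ together with $x+I=w+I$, then $x-w\in I$ gives $x^2=x(x-w)+xw\in I$, hence $x\in\sqrt{I}=I$, contradicting $x\notin I$. Consequently, for \emph{any} edge $a-b$ of $\Gamma_I(R)$ we have $a+I\neq b+I$, and since $(a+I)(b+I)=I$ with $a,b\notin I$, the cosets $a+I$ and $b+I$ are distinct adjacent vertices of $\Gamma(R/I)$.

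For the forward implication, assume $x\perp y$ in $\Gamma_I(R)$. By the observation, $x+I$ and $y+I$ are distinct adjacent vertices of $\Gamma(R/I)$. If some $z+I$ were a common neighbor of $x+I$ and $y+I$ in $\Gamma(R/I)$, then $z\notin I$, $xz\in I$, $yz\in I$, and $z+I$ is distinct from both $x+I$ and $y+I$, whence $z\neq x$ and $z\neq y$; combined with $x\neq y$ and $xy\in I$, this makes $x,y,z$ distinct, mutually adjacent vertices of $\Gamma_I(R)$, contradicting $x\perp y$. Hence $x+I\perp y+I$.

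For the converse, assume $x+I\perp y+I$ in $\Gamma(R/I)$. Then $x,y\notin I$, $x+I\neq y+I$ (so $x\neq y$), and $xy\in I$, so $x$ and $y$ are distinct adjacent vertices of $\Gamma_I(R)$. If some $z\in R\setminus I$ were adjacent to both $x$ and $y$ in $\Gamma_I(R)$, then $xz\in I$ and $yz\in I$, so applying the observation to the edges $x-z$ and $y-z$ shows $z+I$ is distinct from both $x+I$ and $y+I$; thus $x+I,y+I,z+I$ are distinct, mutually adjacent vertices of $\Gamma(R/I)$, contradicting $x+I\perp y+I$. Therefore $x\perp y$ in $\Gamma_I(R)$.

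The only point at which the radical hypothesis on $I$ is used — and the only real obstacle — is the ``non-collapsing'' step: without $\sqrt{I}=I$ one can have $x+I=z+I$ with $xz\in I$ (this is precisely the mechanism behind Proposition~\ref{ch3:compPROP1}), which would sever the correspondence in both directions. Once that step is secured, the rest is just transporting triangles back and forth along the projection, which is routine.
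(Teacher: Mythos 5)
Your proposal is correct and follows essentially the same route as the paper's proof: the same ``non-collapsing'' computation $x^2=x(x-w)+xw\in I$ using $\sqrt{I}=I$, followed by transporting triangles between $\Gamma_I(R)$ and $\Gamma(R/I)$ in both directions. Isolating the non-collapsing step as a single observation applied to all edges is a slightly cleaner organization, but the content is identical.
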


\begin{proof}
 Notice the lemma is vacuously true when $I=\{0\}$. Assume $I\neq \{0\}$.
``$\Rightarrow$'' First notice that $\sqrt{I}=I$ and $xy\in I$ implies that $x+I\neq y+I$. Otherwise, $y=x+i$ for some $i\in I$. Then $x^2=x(x+i)-xi=xy-xi\in I$. But $x\in V(\G_I(R))$ implies that $x\not \in I$. Hence $x\in \sqrt{I}$ and $x\not \in I$, but this is a contradiction as $\sqrt{I}=I$.

Also, $(x+I)(y+I)=0+I$, so that $x+I$ and $y+I$ are adjacent vertices of $\Gamma(R/I)$.  Assume to the contrary, that there exists $z+I\in V(\Gamma(R/I))$ such that $x+I-y+I-z+I-x+I$ is a triangle in $\Gamma(R/I)$. Then $x-y-z-x$ is a triangle in $\Gamma_I(R)$, which is a contradiction as $x\perp y$ in $\Gamma_I(R)$. Therefore, $x+I \perp y+I$ in $\Gamma(R/I)$ as desired.

``$\Leftarrow$'' Assume that $x+I \perp y+I$ in $\G(R/I)$. Then $xy\in I$; whence $x$ and $y$ are adjacent in $\G_I(R)$. Assume that $x\not \perp y$. Then there exists a vertex $c$ adjacent to both $x$ and $y$ in $\G_I(R)$. We claim that then $c+I$ is distinct from $x+I$ and $y+I$ and each of these three vecrtices are adjacent to each other. To see that $c+I$ is distinct from $x+I$ and $y+I$, assume to the contrary. Without loss of generality, assume $c+I=x+I$. Then $c=x+i$ for some $i\in I$. Then $cx\in I$ implies that $x^2\in I$, which is a contradiction as $\sqrt{I}=I$ and $x+I$ is nonzero. Since $x+I$, $y+I$, and $c+I$ are distinct and $xy$, $yc$, and $xc\in I$, it follows that $x+I$, $y+I$, and $c+I$ is a three-cycle in $\G(R/I)$. But this is a contradiction as  $x+I \perp y+I$ in $\G(R/I)$. \end{proof}

\begin{lemma} \label{unqcond} Let $R$ be a commutative ring with nonzero identity and $I$ a radical ideal of $R$. If $\Gamma(R/I)$ is uniquely complemented, $x\perp y$ and $x\perp z$ in $\Gamma_I(R)$, and $\alpha \in R\setminus I$, then
$$\alpha y \in I \text{ if and only if } \alpha z \in I.$$
\end{lemma}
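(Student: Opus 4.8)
The plan is to transfer the hypotheses down to the quotient graph $\G(R/I)$, apply the unique-complementedness there, and then pull the conclusion back up. By Lemma \ref{perplemma}, from $x \perp y$ and $x \perp z$ in $\G_I(R)$ we get $x+I \perp y+I$ and $x+I \perp z+I$ in $\G(R/I)$. Since $\G(R/I)$ is uniquely complemented, this forces $y+I \sim z+I$ in $\G(R/I)$; that is, $y+I$ and $z+I$ are nonadjacent in $\G(R/I)$ but adjacent to exactly the same vertices of $\G(R/I)$.

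Now suppose $\alpha \in R \setminus I$ with $\alpha y \in I$. I would like to conclude $\alpha z \in I$, and by symmetry (swapping the roles of $y$ and $z$) the converse will follow identically. First handle the degenerate possibility that $\alpha + I = y + I$: then $y - \alpha \in I$, so $y^2 = \alpha y + y(y-\alpha) \in I$, contradicting $\sqrt{I} = I$ together with $y \notin I$. So $\alpha + I \neq y + I$, and since $\alpha y \in I$ we have $\alpha + I$ adjacent to $y+I$ in $\G(R/I)$, i.e.\ $\alpha + I$ is a vertex of $\G(R/I)$. Because $y + I \sim z + I$ in $\G(R/I)$ and $\alpha + I$ is adjacent to $y+I$, the relation $\sim$ gives that $\alpha + I$ is also adjacent to $z + I$ in $\G(R/I)$; in particular $\alpha + I \neq z + I$ and $(\alpha + I)(z+I) = 0 + I$, so $\alpha z \in I$. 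This is exactly the desired implication.

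The one point that needs a little care—and the place where the radical hypothesis is genuinely used—is the separation argument showing $\alpha + I \neq y + I$ (and that the $\sim$-partner of $y+I$ is a genuinely distinct vertex), so that $\alpha + I$ really is a vertex of $\G(R/I)$ to which $\sim$ applies; this is precisely the kind of computation already carried out in Lemma \ref{perplemma}, so I would cite it or reproduce the one-line argument $y^2 \in I \Rightarrow y \in I$. I also need the trivial observation that if $\alpha \in I$ the statement is symmetric and vacuous on that side, but the hypothesis is $\alpha \in R \setminus I$, so this does not arise. No other obstacle is expected: the whole proof is a routine descent-to-quotient, apply-Lemma, ascend argument, with Lemma \ref{perplemma} doing the heavy lifting in both directions.
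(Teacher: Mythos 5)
Your proof is correct and follows essentially the same route as the paper: descend to $\G(R/I)$ via Lemma \ref{perplemma}, invoke unique complementation to conclude that $y+I$ and $z+I$ have the same neighbors, and pull the conclusion back up. The paper phrases the middle step as $\mathrm{ann}_{R/I}(y+I)=\mathrm{ann}_{R/I}(z+I)$ (handling the degenerate cases in a parenthetical about $\sqrt{I}=I$), whereas you work with the $\sim$ relation and treat the case $\alpha+I=y+I$ explicitly; these are the same argument.
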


\begin{proof}
The statement is symmetric in terms of $y$ and $z$; so it suffices to show that $\alpha y \in I \Rightarrow \alpha z \in I$.

By Lemma \ref{perplemma}, $x+I \perp y+I$ and $x+I \perp z+I$ in $\Gamma(R/I)$. Since $\Gamma(R/I)$ is uniquely complemented, it follows that $\text{ann}_{R/I}(y+I)=\text{ann}_{R/I}(z+I)$ (here we also using the fact $\text{ann}_{R/I}(y+I)\setminus\{y+I\}=\text{ann}_{R/I}(y+I)$ and $\text{ann}_{R/I}(x+I)\setminus\{x+I\}=\text{ann}_{R/I}(x+I)$ since $\sqrt{I}=I$ ).

Assume $\alpha y \in I$. 
 Then $\alpha+I \in \text{ann}_{R/I}(y+I)=\text{ann}_{R/I}(z+I)$. Hence $(\alpha+I)(z+I)=0+I$, and therefore $\alpha z \in I$ as desired.
\end{proof}

\begin{theorem} \label{radicalG_Iuc}
Let $R$ be a commutative ring with nonzero identity and $I$ a radical ideal of $R$. Then $\Gamma_I(R)$ is complemented if and only if $\Gamma_I(R)$ is uniquely complemented.
\end{theorem}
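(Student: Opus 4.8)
The plan is to prove only the forward implication, since ``uniquely complemented $\Rightarrow$ complemented'' is immediate from the definitions. So suppose $\Gamma_I(R)$ is complemented. If $I$ is prime then $\Gamma_I(R)$ is empty and hence vacuously uniquely complemented, so I may assume $I$ is a proper, non-prime radical ideal. The first move is to pass the hypothesis down to the quotient: by Corollary \ref{compcor1}, the fact that $\Gamma_I(R)$ is complemented forces $\Gamma(R/I)$ to be \emph{uniquely} complemented (equivalently, $T(R/I)$ is von Neumann regular). This is the step where the substantive input enters, namely the reduced-ring result \cite[Theorem 3.5]{DFAetal2003} packaged inside Corollary \ref{compcor1}.

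Now fix vertices $x,y,z$ of $\Gamma_I(R)$ with $x\perp y$ and $x\perp z$; the goal is to show $y\sim z$. The case $y=z$ is trivial, so assume $y\neq z$. First I would verify that $y$ and $z$ are non-adjacent: $z$ is adjacent to $x$, and $z\neq x$ (distinct vertices span any edge) and $z\neq y$, so if $z$ were also adjacent to $y$, then $z$ would be a common neighbor of $x$ and $y$, contradicting $x\perp y$. Hence $yz\notin I$.

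Next I would show $y$ and $z$ have exactly the same neighbors in $\Gamma_I(R)$. Since $\Gamma(R/I)$ is uniquely complemented and $x\perp y$, $x\perp z$ in $\Gamma_I(R)$, Lemma \ref{unqcond} applies and gives, for every $\alpha\in R\setminus I$, that $\alpha y\in I$ if and only if $\alpha z\in I$. Let $w$ be any vertex adjacent to $y$; then $w\in R\setminus I$, $w\neq y$, and $wy\in I$, so taking $\alpha=w$ yields $wz\in I$. If $w=z$ then $z$ would be adjacent to $y$, contradicting the previous paragraph, so $w\neq z$ and $w$ is therefore adjacent to $z$. The roles of $y$ and $z$ are symmetric in Lemma \ref{unqcond} and in the non-adjacency just established, so $y$ and $z$ are adjacent to precisely the same vertices. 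Combined with $yz\notin I$, this is exactly the assertion $y\sim z$. Since $x,y,z$ were arbitrary and $\Gamma_I(R)$ is complemented by hypothesis, $\Gamma_I(R)$ is uniquely complemented.

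I do not expect a serious obstacle here: once Corollary \ref{compcor1}, Lemma \ref{perplemma}, and Lemma \ref{unqcond} are in hand, the argument is short. The only things that require care are the bookkeeping of distinctness conditions (so that ``adjacent'' is always used between genuinely distinct vertices) and remembering to first upgrade ``complemented'' to ``uniquely complemented'' for $\Gamma(R/I)$ via Corollary \ref{compcor1} before invoking Lemma \ref{unqcond}. Conceptually, the proof is just the observation that the annihilator/neighborhood coincidence supplied by Lemma \ref{unqcond} is precisely what the relation $y\sim z$ demands.
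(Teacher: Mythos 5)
Your proof is correct and takes essentially the same route as the paper: pass to the quotient to get $\Gamma(R/I)$ uniquely complemented via Corollary \ref{compcor1}, then deduce $y\sim z$ from the annihilator/neighborhood identity supplied by Lemma \ref{unqcond}. The paper compresses that final deduction into the single line ``the desired result then follows from Lemma \ref{unqcond}''; you have merely written out the distinctness and non-adjacency bookkeeping it leaves implicit.
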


\begin{proof}
If $I=(0)$, then the result follows from \cite[Theorem 3.5]{DFAetal2003}. If $\G_I(R)$ is the empty graph, the statement holds vacuously. Assume that $I\neq (0)$ and that $\G_I(R)$ is not the empty graph (i.e., $I$ is not a prime ideal of $R$).
 
The reverse implication is by definition.

Assume $\Gamma_I(R)$ is complemented. Then $\Gamma_I(R)$ has at least two elements, and thus $V(\Gamma(R/I))$ must be nonempty. Since $I$ is a radical ideal, it follows that $|V(\Gamma(R/I))|\neq 1$ (since there are only two rings up to isomorphism with exactly 2 zero-divisors, and they are both non-reduced rings). Thus $|V(\Gamma(R/I))|\geq 2$, and hence $\Gamma(R/I)$ is complemented by Theorem \ref{comptheorem}. Moreover, $\Gamma(R/I)$ is uniquely complemented by Corollary \ref{compcor1}. The desired result then follows from Lemma \ref{unqcond}. \end{proof}

\begin{theorem}Let $R$ be a commutative ring with nonzero identity and $I$ a proper radical ideal of $R$. Then the following statements are equivalent.
\begin{enumerate}
	\item $\Gamma_I(R)$ is complemented.
	\item $\Gamma_I(R)$ is uniquely complemented.
	\item $\Gamma(R/I)$ is complemented.
	\item $\Gamma(R/I)$ is uniquely complemented.
	\item $T(R/I)$ is von Neumann regular.
\end{enumerate}
Moreover, regardless if $I$ is a radical or non-radical ideal, $\Gamma_I(R)$ is complemented if and only if $\Gamma_I(R)$ is uniquely complemented.
\end{theorem}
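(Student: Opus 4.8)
The plan is to treat this theorem as the synthesis of everything proved so far: every implication will be an appeal to an earlier statement, with one elementary fact about $K^2$ supplied along the way. For the five-way equivalence, note first that since $I$ is a proper radical ideal, Corollary~\ref{compcor1} already records that (1), (3), (4) and (5) are equivalent, i.e.\ that $\Gamma_I(R)$ is complemented $\iff$ $\Gamma(R/I)$ is complemented $\iff$ $\Gamma(R/I)$ is uniquely complemented $\iff$ $T(R/I)$ is von Neumann regular. It therefore remains only to fold statement (2) into this chain, and that is exactly Theorem~\ref{radicalG_Iuc}, which asserts that for a radical ideal $I$ the graph $\Gamma_I(R)$ is complemented if and only if it is uniquely complemented. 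Grafting $(1)\iff(2)$ onto the chain of Corollary~\ref{compcor1} yields $(1)\iff(2)\iff(3)\iff(4)\iff(5)$. The degenerate cases --- $I$ prime (so that $\Gamma_I(R)$ and $\Gamma(R/I)$ are empty and $T(R/I)$ is a field) and $I=(0)$ --- are already absorbed into Theorem~\ref{radicalG_Iuc} and Corollary~\ref{compcor1}, under the convention fixed earlier that the empty graph is vacuously uniquely complemented.

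For the ``moreover'' clause, $I$ is now an arbitrary proper ideal, and I would split on whether $I$ is radical. If $I$ is radical, ``complemented $\iff$ uniquely complemented'' is literally Theorem~\ref{radicalG_Iuc}. If $I$ is non-radical, Theorem~\ref{notradicalK2} identifies ``$\Gamma_I(R)$ is complemented'' with ``$\Gamma_I(R)\cong K^2$'', so it suffices to check that $K^2$ is uniquely complemented: its two vertices $a,b$ are adjacent and the edge $a-b$ lies in no triangle, so $a\perp b$, and since neither vertex has a second, distinct orthogonal mate the uniqueness requirement holds vacuously. Hence $\Gamma_I(R)\cong K^2$ forces $\Gamma_I(R)$ to be uniquely complemented, while the reverse implication is immediate from the definition. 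Combining the two cases shows that $\Gamma_I(R)$ is complemented if and only if it is uniquely complemented for every proper ideal $I$.

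The step most deserving of care is not so much an obstacle as a bookkeeping point: one must confirm that $K^2$ --- and, in the prime or zero cases, the empty graph --- satisfies the (vacuous) uniqueness condition, and one must keep the empty-graph conventions consistent with those adopted before Corollary~\ref{compcor1}. Beyond that there is no real difficulty: the theorem is essentially a corollary of Corollary~\ref{compcor1}, Theorem~\ref{radicalG_Iuc} and Theorem~\ref{notradicalK2}, reorganized into a single statement.
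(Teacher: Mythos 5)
Your proposal is correct and follows essentially the same route as the paper: both deduce the five-way equivalence from Corollary~\ref{compcor1} together with Theorem~\ref{radicalG_Iuc} (the paper just treats the prime and zero-ideal cases a bit more explicitly), and both handle the ``moreover'' clause via Theorem~\ref{notradicalK2} and the observation that $K^2$ is uniquely complemented.
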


\begin{proof}
If $I$ is a prime ideal ideal of $R$, then all of the graphs in question are empty and $R/I$ is an integral domain. Thus all of the conditions hold. 

If $I=(0)$ and radical, then the theorem  holds by \cite[Theorem 3.5]{DFAetal2003}; in this case, the conditions (1) and (3) are equivalent as are conditions (2) and (4).

Assume that $I$ is a nonzero, proper, non-prime, radical ideal of $R$. The equivalences follow from Corollary \ref{compcor1} and Theorem \ref{radicalG_Iuc}. 

For the ``moreover statement,'' if $I$ is not a radical ideal, then $\Gamma_I(R)$ is complemented if and only if $\Gamma_I(R)\cong K^2$ by Theorem \ref{notradicalK2}. However, $K^2$ is uniquely complemented. Thus, regardless of whether or not $I$ is a radical ideal of $R$, we have $\Gamma_I(R)$ is uniquely complemented if and only if $\Gamma_I(R)$ is complemented. \end{proof}

{\bf Acknowledgment:} I would like to thank my advisor, David F. Anderson, for his contribution and comments in my graduate research. This material is derived from dissertation research performed at the University of Tennessee, Knoxville \cite{smithDis}.

\bibliography{mybibfile}

\end{document}